\def\le{\leqslant}
\def\ge{\geqslant}
\renewcommand*{\backref}[1]{}
\renewcommand*{\backrefalt}[4]{%
    \ifcase #1 (Not cited.)%
    \or        (p.\,#2)%
    \else      (pp.\,#2)%
    \fi}
\begin{document}

\newtheorem{theorem}{Theorem}
\newtheorem{lemma}[theorem]{Lemma}
\newtheorem{claim}[theorem]{Claim}
\newtheorem{cor}[theorem]{Corollary}
\newtheorem{prop}[theorem]{Proposition}
\newtheorem{definition}{Definition}
\newtheorem{question}[theorem]{Open Question}
\newtheorem{example}[theorem]{Example}
\newtheorem{remark}[theorem]{Remark}

\numberwithin{equation}{section}
\numberwithin{theorem}{section}

 \newcommand{\F}{\mathbb{F}}
\newcommand{\K}{\mathbb{K}}
\newcommand{\D}[1]{D\(#1\)}
\def\scr{\scriptstyle}
\def\\{\cr}
\def\({\left(}
\def\){\right)}
\def\[{\left[}
\def\]{\right]}
\def\<{\langle}
\def\>{\rangle}
\def\fl#1{\left\lfloor#1\right\rfloor}
\def\rf#1{\left\lceil#1\right\rceil}
\def\le{\leqslant}
\def\ge{\geqslant}
\def\eps{\varepsilon}
\def\mand{\qquad\mbox{and}\qquad}

\def\Res{\mathrm{Res}}
\def\vec#1{\mathbf{#1}}

\def \vs {\vec{s}}

\def\bl#1{\begin{color}{blue}#1\end{color}} 

\newcommand{\Fq}{\mathbb{F}_q}
\newcommand{\Fp}{\mathbb{F}_p}
\newcommand{\Disc}[1]{\mathrm{Disc}\(#1\)}

\newcommand{\Z}{\mathbb{Z}}
\renewcommand{\L}{\mathbb{L}}

\def\cA{{\mathcal A}}
\def\cB{{\mathcal B}}
\def\cC{{\mathcal C}}
\def\cD{{\mathcal D}}
\def\cE{{\mathcal E}}
\def\cF{{\mathcal F}}
\def\cG{{\mathcal G}}
\def\cH{{\mathcal H}}
\def\cI{{\mathcal I}}
\def\cJ{{\mathcal J}}
\def\cK{{\mathcal K}}
\def\cL{{\mathcal L}}
\def\cM{{\mathcal M}}
\def\cN{{\mathcal N}}
\def\cO{{\mathcal O}}
\def\cP{{\mathcal P}}
\def\cQ{{\mathcal Q}}
\def\cR{{\mathcal R}}
\def\cS{{\mathcal S}}
\def\cT{{\mathcal T}}
\def\cU{{\mathcal U}}
\def\cV{{\mathcal V}}
\def\cW{{\mathcal W}}
\def\cX{{\mathcal X}}
\def\cY{{\mathcal Y}}
\def\cZ{{\mathcal Z}}

\def \brho{\boldsymbol{\rho}}

\def \pf {\mathfrak p}

\def \Prob{{\mathrm {}}}
\def\e{\mathbf{e}}
\def\ep{{\mathbf{\,e}}_p}
\def\epp{{\mathbf{\,e}}_{p^2}}
\def\em{{\mathbf{\,e}}_m}

\def \F{{\mathbb F}}
\def \K{{\mathbb K}}
\def \Z{{\mathbb Z}}
\def \N{{\mathbb N}}
\def \Q{{\mathbb Q}}
\def \R{{\mathbb R}}

\def\GL{\operatorname{GL}}
\def\SL{\operatorname{SL}}
\def\PGL{\operatorname{PGL}}
\def\PSL{\operatorname{PSL}}

\def\ep{\mathbf{e}_p}
\def\eq{\mathbf{e}_q}

\def\Mob{M{\"o}bius }

%

\title[ M{\"o}bius  Orbits at  Prime Times]{On the Dynamical System Generated by the M{\"o}bius  transformation  at 
Prime Times}

 \author[L. M{\'e}rai]{L{\'a}szl{\'o} M{\'e}rai}
\address{L.M.: Johann Radon Institute for Computational and Applied Mathematics, Austrian Academy of Sciences and Institute of Financial Mathematics and Applied Number Theory,
Johannes Kepler University,  Altenberger Stra\ss e 69, A-4040 Linz, Austria} 
\email{laszlo.merai@oeaw.ac.at}
\author[I.~E.~Shparlinski]{Igor E. Shparlinski}
\address{I.E.S.: School of Mathematics and Statistics, University of New South Wales.
Sydney, NSW 2052, Australia}
\email{igor.shparlinski@unsw.edu.au}

\pagenumbering{arabic}

\begin{abstract}
We study the distribution of the sequence of elements of the discrete 
dynamical system 
generated by iterations of  the \Mob map $x \mapsto (ax + b)/(cx+d)$ over a finite 
field of $p$ elements at the moments of time that correspond to prime numbers. 
In particular, we obtain nontrivial estimates of exponential sums with 
such sequences. 
\end{abstract} 

\subjclass[2010]{11L07, 11N60, 11T23, 37P05}
\keywords{\Mob function, \Mob transformation, \Mob  disjointness, exponential sums over primes} 

\maketitle

\section{Introduction}
 
\subsection{Motivation and background} 
Let $p$ be a sufficiently large prime and let $\F_p$ be the field of $p$ elements
which we identify with the least residue system modulo $p$, that is, with the set
$\{0, \ldots, p-1\}$.

With any a nonsingular matrix
\begin{equation}
\label{eq:MatrM}
A = \begin{pmatrix} a & b\\ c & d \end{pmatrix} \in \GL_2(\F_p), 
\end{equation}
we consider the \Mob transformation  
$x \mapsto \psi(x)$ associated with $A$  where 
\begin{equation}
\label{Perm}
 \psi(x)  =  \frac{ax + b}{cx + d}. 
\end{equation}

Investigating the distributional properties of elements' orbits  
of the  discrete dynamical system 
$x \mapsto \psi(x)$ on $\F_p$ and of similar systems over residue rings has been a very active area of research, especially in the theory of pseudorandom number 
generators~\cite{GuNiSh, MeSh, NiSh1,NiSh2, NiWi, Shp}, see also~\cite{TopWin, Win} for a general background 
on this and other related  pseudorandom number generators. In fact, in the theory of pseudorandom 
number generators, typically only the special case  $\psi(x) = ax^{-1}  + b$ is considered (which is 
computationally more efficient) but there is no doubt the above results can be extended to any 
transformations of the form~\eqref{Perm}. 

Here we interested in more arithmetic aspects of this problem and  study the distribution of  element in orbits of the \Mob transformation 
at the moments of time that correspond to prime numbers.

More precisely, let $u_0, u_1, \ldots$ be an orbit of the dynamical system 
generated by $\psi$ that originates at some $u_0 \in \F_p$, 
that is, 
\begin{equation}
\label{eq:Gen}
u_{n} =\psi\(u_{n-1}\),
 \qquad n =  1,2,
\ldots\,,
\end{equation}
where $u_0$ is the {\it initial value\/}.

We can also write 
$$
u_n = \psi^{n}(u_0) \qquad n =  1,2,
\ldots\,,
$$
where $\psi^{0}$ is the identity map and $ \psi^{n}$
is the $n$th composition of $\psi$.

Since for any $A \in \GL_2(\F_p)$ the \Mob transformation is reversible, 
it  is obvious that the sequence~\eqref{eq:Gen} is purely
periodic with some period $t \le p$, see~\cite{Chou,FN}
for several results about the possible values of~$t$.
 For example, it is known when such sequences
achieve the
largest possible period, which is obviously $t = p$, see~\cite{FN}.

The series of works~\cite{GuNiSh,NiSh1,NiSh2,Shp} is devoted to the special case of the transformation  $\psi(x) = ax^{-1}  + b$ and several results 
about the distribution of elements of the sequence~\eqref{eq:Gen}
are given. Quite naturally, these results are based on bounds of exponential sums such as
\begin{equation}
\label{eq:Sing Sum}
S_{h}(N) =  \sum_{n=1}^{N} \ep\(hu_n\),
\end{equation}
where for an integer $q$ and a complex $z$ we define
$$
\eq(z) = \exp(2 \pi i z/q).
$$
We also remark that a version of~\cite[Lemma~5.3]{AbSh} improves and generalises the bounds of~\cite{NiSh1} 
on $S_{h}(N)$, see Lemma~\ref{lem:SingSum-1}. It can easily be extended to the multidimensional
settings~\cite{GuNiSh} and thus has direct applications to the theory pseudorandom number generators.

Here, motivated by recent results of Sarnak and Ubis~\cite{SaUb}
on much more complicated dynamical systems on $\mathrm{SL_2(\R)}$, 
we consider the distribution of 
the sequence~\eqref{eq:Gen}
at prime moments of time $n = \ell$. In turn, this is equivalent 
(see~\cite{DrTi}) to studying exponential sums
$$
T_h(N) =  \sum_{\substack{\ell \le N\\\ell~\mathrm{prime}}} \ep\(hu_\ell\).
$$

We also note that the results of~\cite{BCFS,BFGS,Bourg1,GarShp,KMS}
have an interpretation as results on the behaviour at prime
moments of time of the dynamical system generated by the 
linear transformation $x \mapsto gx$ on $\F_p$ (or other residue rings), that is, 
of the sequence $u_0g^\ell$, where $\ell$ runs through the 
primes up to $N$. 

\subsection{Our result and approach} 
Our main result is the following bound:

\begin{theorem}
\label{thm:PrimeSum}  
Assume that the characteristic polynomial of the matrix $A$ given by~\eqref{eq:MatrM} has two distinct roots in $\F_{p^2}$. 
For any $\varepsilon>0$ there exist some $B$ such that 
if the period $t$ of the sequence~\eqref{eq:Gen}  satisfies 
$t  \ge p^{3/4+\varepsilon}$,
then, for a sufficiently large $p$, any constant $C> B$  and  $p^C \ge N \ge p^{B}$we have
$$
\max_{h \in \F_p^*} |T_h(N)| \le N p^{-\eta}, 
$$
where  $\eta> 0$ depends only on $C$ and $\varepsilon$. 
\end{theorem}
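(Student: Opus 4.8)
The plan is to diagonalise $A$ over $\F_{p^2}$, turn the orbit into the iterates of a linear map, and then estimate the prime sum by Vaughan's identity, using the bound of Lemma~\ref{lem:SingSum-1} for the Type~I contribution and bounds for character sums over large multiplicative subgroups for the Type~II contribution. By hypothesis the characteristic polynomial of $A$ has distinct roots $\lambda,\mu\in\F_{p^2}$, so $A=P\,\mathrm{diag}(\lambda,\mu)\,P^{-1}$ with $P\in\GL_2(\F_{p^2})$, the transformation $\psi$ is conjugate over $\F_{p^2}$ to $x\mapsto\theta x$ with $\theta=\lambda/\mu\neq1$, and therefore
$$
u_n=\frac{A_1\theta^n+A_2}{A_3\theta^n+A_4}\qquad(n\ge0)
$$
for fixed $A_1,A_2,A_3,A_4\in\F_{p^2}$ depending only on $A$, $P$ and $u_0$. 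Since $t\ge p^{3/4+\varepsilon}>1$ the orbit has no fixed point, so $\theta$ has multiplicative order exactly $t$. Two regimes occur: either $\lambda,\mu\in\F_p$, whence $\theta\in\F_p^*$, $t\mid p-1$, and $\langle\theta\rangle$ is a large subgroup of the prime field; or $\mu=\lambda^p$, whence $\theta^p=\theta^{-1}$, $t\mid p+1$, $\langle\theta\rangle$ lies in the norm-one torus of $\F_{p^2}^*$, and, using $u_n=\overline{u_n}$, each $u_n$ is a ratio of two $\F_p$-valued traces of the form $\mathrm{Tr}_{\F_{p^2}/\F_p}(\cdot\,\theta^n)$.

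After discarding the negligible contribution of prime powers and a partial summation, it suffices to bound $\sum_{n\le N}\Lambda(n)\ep(hu_n)$ uniformly in $h\in\F_p^*$. Vaughan's identity (or the Heath--Brown identity), with cut-off parameters taken to be fixed small powers of $p$, reduces this, up to $\log$ factors, to a bounded number of sums of two shapes. For the \emph{Type~I} sums $\sum_{d\le D}a_d\sum_{m\le N/d}\ep(hu_{dm})$ with $|a_d|\le\tau(d)\log N$ and $D$ a small power of $p$, I would note that for fixed $d$ the inner sum is an incomplete orbit sum for the dynamical system generated by $\psi^d$ — whose matrix $A^d$ still has distinct eigenvalues when $t\nmid d$, and whose period is $t/\gcd(t,d)$ — so Lemma~\ref{lem:SingSum-1} applies to it; summing over $d\le D$ and treating separately the degenerate moduli (those with $t\mid d$, where the inner sum is trivially $\le N/d$, and those with $\gcd(t,d)$ atypically large), one finds that their combined contribution is $O\bigl((N/t)p^{o(1)}\bigr)$ up to lower-order terms, which is admissible because $t$ is a fixed positive power of $p$.

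The \emph{Type~II} sums $\sum_{D_1<d\le2D_1}\sum_m a_db_m\ep(hu_{dm})$, with $|a_d|,|b_m|$ bounded by $\tau(\cdot)\log N$ and $D_1$ in a medium range, are the core of the matter. Applying the Cauchy--Schwarz inequality in $d$ to remove $b_m$, one is left with bounding $\sum_{D_1<d\le2D_1}\ep\bigl(h(u_{dm_1}-u_{dm_2})\bigr)$ for fixed $m_1\neq m_2$. Writing $w=\theta^d$, the quantity $u_{dm_1}-u_{dm_2}$ is a rational function $F(w)$, non-constant on $\langle\theta\rangle$ since $m_1\not\equiv m_2\pmod t$ (after restricting the $m$-ranges below $t$ or isolating the few congruent pairs); as $d$ runs over an interval, $w=\theta^d$ runs over consecutive powers of $\theta$, so completing the sum over the period $t$ reduces matters to the mixed multiplicative--additive character sum $\sum_{w\in\langle\theta\rangle}\ep(hF(w))\chi(w)$ over the subgroup $\langle\theta\rangle$ of order $t$. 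Feeding in bounds for such sums over large subgroups — of $\F_p^*$ in the first regime, of the norm-one torus (where the sums are of Kloosterman type) in the second — gives cancellation by a fixed power of $p$, and this is precisely the step that forces the hypothesis $t\ge p^{3/4+\varepsilon}$. Assembling the Type~I and Type~II estimates gives $\sum_{n\le N}\Lambda(n)\ep(hu_n)\ll Np^{-\eta'}$ with $\eta'>0$ depending on $\varepsilon$ and on the Vaughan parameters, and a final partial summation yields $|T_h(N)|\le Np^{-\eta}$; the lower bound $N\ge p^B$ is what lets the $p^{o(1)}$ and small-power losses accumulated over the boundedly many ranges be absorbed.

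I expect the main obstacle to be the Type~II character sum itself: obtaining cancellation in $\sum_{w\in\langle\theta\rangle}\ep(hF(w))\chi(w)$ uniformly in $\chi$ and $h\neq0$ requires both a careful non-degeneracy analysis of $F$ — ruling out that $F$ is constant, or a character up to an additive constant, using $m_1\neq m_2$ and the explicit shape of the conjugating M\"{o}bius map in each of the two regimes — and a sufficiently strong subgroup character-sum input, whose available strength over $\F_{p^2}$ and for a rational function $F$ of possibly large degree is what pins the threshold at $p^{3/4}$ rather than a smaller power of $p$. A secondary, more technical difficulty is the bookkeeping of the degenerate moduli in the Type~I sums, which goes through only because $t$ is a genuine power of $p$.
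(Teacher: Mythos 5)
Your reduction to Type~I and Type~II sums, and your treatment of the Type~I sums by Lemma~\ref{lem:SingSum-1}, are fine; the diagonalisation over $\F_{p^2}$ is also consistent with what underlies the single-sum lemmas. The genuine gap is in the Type~II sums in the balanced regime. After Cauchy--Schwarz the inner object is $\sum_{d}\ep\(h(u_{dm_1}-u_{dm_2})\)$, and writing $w=\theta^d$ the quantity $u_{dm}$ becomes a rational function of $w$ of degree about $m$; hence your $F$ has degree of order $M=N/D_1$, and every Weil-type bound for $\sum_{w\in\langle\theta\rangle}\ep(hF(w))\chi(w)$ carries the factor $\deg F\approx M$. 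Running the numbers, Cauchy--Schwarz in $d$ wins only when $D_1\gg Mp^{1/2+\eta}$, and Cauchy--Schwarz in $m$ only when $M\gg D_1p^{1/2+\eta}$; in the window $N^{1/2}p^{-1/4}\ll D_1\ll N^{1/2}p^{1/4}$ --- which is unavoidably present when the Vaughan cut-offs are small powers of $p$ and $N\ge p^B$ with $B$ large --- both bounds are trivial. This is exactly the obstruction the paper flags (``for sums over essentially square regions we do not have nontrivial bounds''), and no strengthening of subgroup character-sum inputs repairs it, because the loss sits in the degree of $F$, not in the size of the subgroup.

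The paper circumvents this with two ingredients your proposal does not contain. First, it uses the Heath-Brown identity with a large number $J$ of factors, so that every ``rough'' variable $m_i$ is at most $N^{1/J}$; the bilinear sums that then arise (Lemma~\ref{lem:BilinExp}) are extremely skewed, $K\ge Mp^{1/2+\kappa}$, which is precisely the regime where Cauchy--Schwarz combined with Lemma~\ref{lem:SingSum-s} is nontrivial. Second --- and this is the step missing entirely from your outline --- in the remaining case all coefficients are smooth (divisor-type) and the summation variable is a product $n_1\cdots n_\nu$ of many medium-sized factors; the paper shows via the Burgess bound for multiplicative characters modulo the \emph{period} $t$ (Lemma~\ref{lem:UD-Prod}) that such products equidistribute in the reduced residue classes mod $t$ once each $N_i\ge t^{1/3+\kappa}$, which converts the multiple sum into a complete orbit sum (Lemma~\ref{lem:MultExp}). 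The threshold $t\ge p^{3/4+\varepsilon}$ arises from this Burgess exponent $1/3$ meeting the constraint $N^{1/J}\ge tp^{-1/2-\kappa}$ coming from the skewed bilinear case (under GRH the paper gets $t\ge p^{1/2+\varepsilon}$); it is not, as you suggest, pinned by the strength of character sums over the subgroup $\langle\theta\rangle$ in $\F_{p^2}$.
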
  

We note that the condition $t  \ge p^{3/4+\varepsilon}$ in Theorem~\ref{thm:PrimeSum} is not very restrictive 
as it is easy to see that  one actually expects $t = p^{1+o(1)}$ for randomly chosen matrix $A \in  \GL_2(\F_p)$,
see also Section~\ref{sec:comm}.  

To establish Theorem~\ref{thm:PrimeSum} we take full advantage of the flexibility of the 
Heath-Brown identity~\cite{HB}, see also~\cite[Proposition~13.3]{IwKow}. In particular it allows us to form 
very skewed bilinear sums to which we can estimate nontrivially. This is crucial for our result since    for sum over essentially 
square regions we do not have nontrivial bounds. 
Another new ingredient is using the Burgess bound
on character sums, see~\cite[Theorem~12.6]{IwKow}, to estimate multiple sums of high dimension. 
Again the agility of  the Heath-Brown identity~\cite{HB} allows us to arrange such sums. 

\subsection{Notation} 
Throughout the paper, the implied constants in 
the symbols `$O$' and `$\ll$'
may occasionally, where obvious, depend on 
the  real positive parameters $C$
and $\varepsilon$, and are absolute otherwise
(we recall that $U \ll V$  is equivalent
to $U = O(V)$).

 \section{Tools from analytic number theory}

\subsection{Products in residue classes} 
  Let $\varphi(k)$ denote the Euler function and let  $\tau(k)$ denote the number of positive integer
divisors of an integer $k \ge 1$. We first  recall the following well-known estimates
\begin{equation}
\label{eq:phitau}
 \tau(k) = k^{o(1)}
 \mand  k \ge  \varphi(k) \gg \frac{k}{\log \log k}
\end{equation}
as $k \to \infty$,
see~\cite[Theorems~317 and~328]{HaWr}.

 First we need to recall the following bound on the distribution of products 
in residue classes. 

Given $\nu \ge 1$ integers $N_1, \ldots, N_\nu\ge 1$ and an arbitrary integer $n$, 
let $R_t(N_1, \ldots, N_\nu; n)$ be the number of solutions to the congruence 
$$
n_1\ldots n_\nu \equiv n \pmod t, \qquad 1 \le n_i \le N_i,  \  i = 1, \ldots, \nu. 
$$

We show that for sufficiently large $N_1, \ldots, N_\nu$, for  $\gcd(n,t)=1$ the value of $N_t(K,M;n)$ 
is close to its expected value. 

\begin{lemma}
\label{lem:UD-Prod}  For any  fixed $\kappa > 0$ there are  some $i_0$ and $\eta > 0$, which depend only on $\kappa$, 
 such that if $\nu >i_0$ then for any integers $t \ge N_1, \ldots, N_\nu \ge t^{1/3+\kappa}$ and $n\geq 1$ with $\gcd(n,t)=1$, 
 we have 
$$
R_t(N_1, \ldots, N_\nu; n)=  \frac{1}{\varphi(t)} N_1^* \ldots N_\nu^* + O\( N_1 \ldots N_\nu t^{-1-\eta}\), 
$$
where
$$
N_i^* = \# \{  1 \le n_i  \le N_i:~\gcd(n_i,t)=1\},  \qquad  i = 1, \ldots, \nu. 
$$
\end{lemma}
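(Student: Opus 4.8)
The plan is to detect the congruence $n_1\cdots n_\nu\equiv n\pmod t$ using multiplicative characters modulo $t$. Writing $\chi_0$ for the principal character, orthogonality gives
$$
R_t(N_1,\ldots,N_\nu;n)=\frac{1}{\varphi(t)}\sum_{\chi\bmod t}\overline{\chi(n)}\prod_{i=1}^{\nu}\Bigl(\sum_{1\le n_i\le N_i}\chi(n_i)\Bigr).
$$
The contribution of $\chi=\chi_0$ is exactly $\varphi(t)^{-1}N_1^*\cdots N_\nu^*$, which is the main term. So everything reduces to bounding, for each nonprincipal $\chi$, the product of the character sums $S_i(\chi)=\sum_{1\le n_i\le N_i}\chi(n_i)$, and then summing the resulting bound over all $\chi\neq\chi_0$.

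First I would invoke the Burgess bound (in the form of~\cite[Theorem~12.6]{IwKow}) for each factor: for any nonprincipal $\chi$ modulo $t$ and any fixed integer $r\ge 2$ one has $|S_i(\chi)|\ll N_i^{1-1/r}t^{(r+1)/(4r^2)+o(1)}$ uniformly. Since $N_i\ge t^{1/3+\kappa}$, choosing $r$ large enough (depending only on $\kappa$) makes the exponent of $t$ a negative power of $N_i$, so $|S_i(\chi)|\le N_i\,t^{-\delta}$ for some $\delta=\delta(\kappa)>0$, uniformly over all nonprincipal $\chi$ — note this is the place where the hypothesis $N_i\ge t^{1/3+\kappa}$ is essential, and where the dependence of $i_0$ and $\eta$ on $\kappa$ enters. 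Then for the off-diagonal terms I would keep the Burgess bound on, say, the first $i_0$ of the factors and the trivial bound $|S_i(\chi)|\le N_i$ on the rest, obtaining
$$
\Bigl|\prod_{i=1}^{\nu}S_i(\chi)\Bigr|\le N_1\cdots N_\nu\, t^{-i_0\delta}
$$
for every nonprincipal $\chi$. Summing trivially over the $\varphi(t)\le t$ nonprincipal characters yields an error term $O(N_1\cdots N_\nu\,t^{1-i_0\delta+o(1)})$, so taking $i_0$ large enough that $i_0\delta>2$ (for a safety margin absorbing the $t^{o(1)}$) gives the claimed error $O(N_1\cdots N_\nu\,t^{-1-\eta})$ with $\eta=\eta(\kappa)>0$, and $\nu>i_0$ guarantees there are at least $i_0$ factors to apply Burgess to.

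The main obstacle is purely quantitative bookkeeping: one has to track how large $r$ must be taken in Burgess so that $N^{-1/r}t^{(r+1)/(4r^2)}$ is a fixed negative power of $t$ given only $N\ge t^{1/3+\kappa}$ — the threshold $1/3$ is exactly the limit of what Burgess delivers in this form for a single sum — and then how many such factors $i_0$ one needs so that the accumulated saving beats the $\varphi(t)$ loss from summing over characters. There is also a minor technical point that $t$ need not be prime, so one should use the Burgess bound in a form valid for general (or at least squarefree) moduli, or restrict attention to the relevant case; this does not affect the structure of the argument. Everything else — orthogonality, the evaluation of the principal-character term, the trivial bounds — is routine.
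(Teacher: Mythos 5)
Your proposal is correct and follows essentially the same route as the paper: orthogonality of multiplicative characters to isolate the main term from $\chi_0$, the Burgess bound (\cite[Theorem~12.6]{IwKow}) to save a fixed power $t^{-\delta}$ in each factor with $N_i\ge t^{1/3+\kappa}$, and then accumulating enough factors ($\nu>i_0$ with $i_0$ depending only on $\kappa$) so that the product of savings beats the number of characters. The only differences are cosmetic bookkeeping (the paper applies Burgess to all $\nu$ factors and uses the $1/\varphi(t)$ prefactor to cancel the character count, so it only needs $i_0=\rf{\eta^{-1}}$ rather than your more generous $i_0\delta>2$), and your remark on general moduli correctly identifies why the exponent $1/3$ is the relevant threshold here.
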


\begin{proof}   
Let $\cX_t$ denote the set of multiplicative characters 
modulo $t$ 
and let $\chi_0$ denote the principal character; we refer to~\cite[Chapter~3]{IwKow} for a 
background on multiplicative characters.
We also denote by $\cX_t^* = \cX_t \setminus \{\chi_0\}$ the set of
non principal characters.

Using the orthogonality of multiplicative characters, we can express $R_t(N_1, \ldots, N_\nu; n)$ via
the following character sums, 
\begin{align*}
R_t(N_1, \ldots, N_\nu; n) &= \sum_{n_1=1}^{N_1} \ldots \sum_{n_\nu=1}^{N_\nu} 
\frac{1}{\varphi(t)}\sum_{\chi \in \cX_t} \chi\(n_1\ldots n_\nu  n^{-1}\) \\
& = \frac{1}{\varphi(t)}\sum_{\chi \in \cX_t} \chi\(n^{-1}\)  \prod_{i=1}^\nu 
 \sum_{n_i=1}^{N_i}  \chi\(n_i\).
\end{align*}

We now see that the contribution from the  principal character gives the main term
$N_1^* \ldots N_\nu^*/\varphi(t)$. 

For other characters, since $N_i \ge p^{1/3+\kappa} $, by the Burgess bounds, 
see~\cite[Theorem~12.6]{IwKow},
we see that there is some $\eta> 0$ 
which depends only on $\kappa$ and such that for any $\chi \in  \cX_t^*$ we have 
$$
 \sum_{n_i=1}^{N_i}  \chi\(n_i\) \ll N_i t^{-\eta}, \qquad  i = 1, \ldots, \nu. 
$$ 
Hence for $i_0 =\rf{\eta^{-1}} $ and $\nu > i_0$ we have
$$
 \prod_{i=1}^\nu \sum_{n_i=1}^{N_i}  \chi\(n_i\) 
\ll N_1 \ldots N_\nu t^{-1-\eta}
$$
which concludes the proof.
\end{proof}

\subsection{The Heath-Brown identity} 
\label{sec:H-B Ident} 

As usual, we use $\mu(n)$  to denote the M{\"o}bius function and 
$\Lambda(n)$  to denote the von~Mangoldt function given by
$$
\Lambda(n)=
\begin{cases}
\log \ell &\quad\text{if $n$ is a power of the prime $\ell$,} \\
0&\quad\text{if $n$ is not a prime power.}
\end{cases}
$$

We need the following decomposition of $\Lambda(n)$ which is due to Heath-Brown~\cite{HB}, see also~\cite[Proposition~13.3]{IwKow}.

\begin{lemma} 
\label{lem:H-B ident} 
For any integer $J\geq 1$ and $n<2X$, we have
$$
\Lambda(n)=-\sum_{j=1}^J (-1)^j \binom{J}{j}\sum_{m_1,\ldots, m_j\leq Z}\mu(m_1)\ldots \mu(m_j) \sum_{m_1\ldots m_j n_1\ldots n_j=n}\log n_1,
$$
where $Z=X^{1/J}$.
\end{lemma}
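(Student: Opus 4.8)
The plan is to establish the identity at the level of Dirichlet series and then compare the coefficients of $n^{-s}$ on the two sides. I would start from the three standard expansions $-\zeta'(s)/\zeta(s) = \sum_n \Lambda(n) n^{-s}$, $1/\zeta(s) = \sum_m \mu(m) m^{-s}$ and $-\zeta'(s) = \sum_n (\log n) n^{-s}$, and introduce the truncated reciprocal $M(s) = \sum_{m \le Z} \mu(m) m^{-s}$, which is precisely the object producing the constraints $m_i \le Z$ in the statement. The whole point is that replacing $1/\zeta$ by its truncation $M$ introduces an error that is invisible in the coefficient of $n^{-s}$ as long as $n < 2X$.

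First I would record the key support property. The Dirichlet series $1 - \zeta(s) M(s)$ has coefficient $[n=1] - \sum_{d \mid n,\, d \le Z} \mu(d)$ at $n^{-s}$; for $n \le Z$ every divisor $d \mid n$ satisfies $d \le Z$, so this equals $[n=1] - \sum_{d \mid n} \mu(d) = 0$. Hence $1 - \zeta M$ is supported on $n > Z$, and therefore its $J$-fold product $(1 - \zeta M)^J$, a Dirichlet convolution of $J$ factors each supported on $n > Z$, is supported on $n > Z^J = X$.

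The algebraic core is then the binomial expansion
$$
1 - (1 - \zeta M)^J = - \sum_{j=1}^J \binom{J}{j} (-1)^j (\zeta M)^j,
$$
which, after multiplication by $-\zeta'/\zeta$ and the simplification $(\zeta'/\zeta)(\zeta M)^j = -(-\zeta')\,\zeta^{j-1} M^j$, rearranges to
$$
-\frac{\zeta'}{\zeta} = - \sum_{j=1}^J (-1)^j \binom{J}{j} (-\zeta')\,\zeta^{j-1} M^j \; - \; \frac{\zeta'}{\zeta} (1 - \zeta M)^J .
$$

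The main, and essentially the only, obstacle is to show that the last term contributes nothing to the coefficient of $n^{-s}$ when $n < 2X$. The coefficient of $\frac{\zeta'}{\zeta}(1 - \zeta M)^J$ at $n$ is a sum over factorisations $n = ab$ of $\Lambda(a)$ times a coefficient of $(1 - \zeta M)^J$ at $b$; by the support property the latter forces $b > X$, so that $a = n/b < 2X/X = 2$, i.e. $a = 1$, where $\Lambda(1) = 0$. Thus this remainder vanishes identically for $n < 2X$. It then remains to read off the coefficient of $n^{-s}$ in $(-\zeta')\,\zeta^{j-1} M^j$: the single factor $-\zeta'$ supplies a variable $n_1$ weighted by $\log n_1$, the $j-1$ factors of $\zeta$ supply $n_2, \ldots, n_j$, and the $j$ factors of $M$ supply $m_1, \ldots, m_j \le Z$ weighted by $\mu(m_1)\cdots\mu(m_j)$, giving exactly
$$
\sum_{m_1 \cdots m_j\, n_1 \cdots n_j = n,\ m_i \le Z} \mu(m_1)\cdots \mu(m_j) \log n_1 .
$$
Inserting this into the displayed identity yields the claimed formula. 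I would note finally that, once a single coefficient is isolated, every manipulation above involves only finite sums, so the argument can be carried out formally in the ring of arithmetic functions under Dirichlet convolution and no convergence issue arises; equivalently, one works in the half-plane $\Re s > 1$ and invokes uniqueness of Dirichlet coefficients.
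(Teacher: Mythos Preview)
Your argument is correct and is precisely the standard derivation of the Heath-Brown identity: truncate $1/\zeta$ to $M(s)=\sum_{m\le Z}\mu(m)m^{-s}$, observe that $1-\zeta M$ is supported on integers $>Z$ so that $(1-\zeta M)^J$ is supported on integers $>Z^J=X$, expand the binomial, and check that the remainder $(-\zeta'/\zeta)(1-\zeta M)^J$ contributes nothing at $n<2X$ because the only surviving factorisation has $a=1$ and $\Lambda(1)=0$. The identification of the coefficient of $(-\zeta')\zeta^{j-1}M^j$ is also correct.

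As for the comparison: the paper does not actually prove this lemma. It is stated as a citation, attributed to Heath-Brown~\cite{HB} with a pointer to~\cite[Proposition~13.3]{IwKow}. Your proof is essentially the argument one finds in those references, so there is nothing to contrast; you have simply filled in what the paper deliberately left as a black box.
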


For a real $A>0$ we use $a\sim A$ to denote $A\leq a <2A$. We also write $A \asymp B$ as 
an equivalent of $A\ll B \ll A$.

Hence summing the identity of Lemma~\ref{lem:H-B ident} over all $n \sim N$ and separating the other 
variables in dyadic ranges we obtain

\begin{lemma}
\label{lem:HB sum}
For any integer $J\geq 1$ and arithmetic function $f$, we have
$$
\sum_{n \sim N}\Lambda(n)f(n)\ll \sum_{1\leq j\leq J} 
\left| S_j(\mathbf{M}_j, \mathbf{N}_j)\right| 
$$
for some integer vectors 
$$
(\mathbf{M}_j, \mathbf{N}_j)=(M_{j,1},\ldots, M_{j,j},N_{j,1},\ldots, N_{j,j} )
$$ 
satisfying
$$
M_{j,1},\ldots, M_{j,j} \leq N^{1/J} \mand  M_{j,1}\cdots M_{j,j}N_{j,1}\cdots N_{j,j} \asymp N,
$$
where
\begin{align*}
& S_j\(\mathbf{M}_j, \mathbf{N}_j\)\\
& \qquad =
\sum_{\substack{m_1\ldots m_j n_1\ldots n_j\sim N \\ m_i\sim M_{j,i},\,  n_i \sim N_{j,i}}}\mu(m_1)\ldots \mu(m_j) \log n_1 f(m_1\ldots m_j n_1\ldots n_j), 
\end{align*}  
where $j=1, \ldots, J$, and the implied constants may depend on $J$.
\end{lemma}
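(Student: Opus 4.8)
The plan is to unfold the Heath--Brown identity of Lemma~\ref{lem:H-B ident} and then reorganise the resulting multiple sum into dyadic blocks. First I would apply Lemma~\ref{lem:H-B ident} with $X=N$; every $n\sim N$ satisfies $n<2N$, so this is legitimate and gives $Z=N^{1/J}$. Multiplying the identity by $f(n)$, summing over $n\sim N$ and interchanging the order of summation so that the free variables become $m_1,\dots ,m_j$ (each $\le N^{1/J}$) and $n_1,\dots ,n_j$, subject to $m_1\cdots m_j n_1\cdots n_j=n\sim N$, I obtain
$$
\sum_{n\sim N}\Lambda(n)f(n)=-\sum_{j=1}^{J}(-1)^{j}\binom{J}{j}\,T_j,
$$
where
$$
T_j=\sum_{\substack{m_1\cdots m_j n_1\cdots n_j\sim N\\ m_i\le N^{1/J}}}\mu(m_1)\cdots \mu(m_j)\,\log n_1\,f(m_1\cdots m_j n_1\cdots n_j).
$$

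Since $J$ is fixed, $\binom{J}{j}=O_J(1)$, so $\sum_{n\sim N}\Lambda(n)f(n)\ll\sum_{1\le j\le J}|T_j|$. Next, for each $j$ I would split the range of each of the $2j$ variables $m_1,\dots ,m_j,n_1,\dots ,n_j$ into dyadic intervals, writing $m_i\sim M_{j,i}$ and $n_i\sim N_{j,i}$ with all $M_{j,i},N_{j,i}$ powers of $2$. As each variable is $\le N$ there are $O(\log N)$ admissible values for each size, hence $O_J((\log N)^{2j})$ boxes in all; a box contributes nothing unless $M_{j,i}\le N^{1/J}$ for every $i$ (forced by $m_i\le N^{1/J}$) and, because $m_1\cdots m_j n_1\cdots n_j\sim N$, unless $M_{j,1}\cdots M_{j,j}N_{j,1}\cdots N_{j,j}\asymp N$ with an implied constant depending only on $j\le J$. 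Retaining for each $j$ the surviving box $(\mathbf{M}_j,\mathbf{N}_j)$ that maximises $|S_j(\mathbf{M}_j,\mathbf{N}_j)|$, and absorbing the harmless $(\log N)^{O_J(1)}$ number of boxes into the implied constant, yields the stated bound.

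There is no genuine obstacle here: once Lemma~\ref{lem:H-B ident} is in hand the argument is pure bookkeeping. The only points needing care are to check that the dyadic decomposition simultaneously respects the hard cut-off $M_{j,i}\le N^{1/J}$ inherited from $Z=N^{1/J}$ and the size relation $M_{j,1}\cdots M_{j,j}N_{j,1}\cdots N_{j,j}\asymp N$, and to observe that the weights $\mu(m_i)$ and $\log n_1$ are transported unchanged into $S_j$. The substantive work, namely turning the $S_j$ into usefully skewed bilinear forms to which the Burgess bound and the estimates of Section~\ref{sec:H-B Ident} can be applied, is deferred to the applications of this lemma.
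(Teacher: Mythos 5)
Your argument is exactly the one the paper intends: the paper derives this lemma in a single sentence by summing the identity of Lemma~\ref{lem:H-B ident} over $n\sim N$ and separating the variables into dyadic ranges, which is precisely the bookkeeping you carry out. The one caveat is that selecting a single maximal box for each $j$ costs a factor $(\log N)^{O_J(1)}$, which is not literally a constant depending only on $J$ as you claim; but this imprecision is shared by the paper's own statement and is harmless in the applications, where every bound has $N^{o(1)}$ room to spare.
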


 \section{Exponential sums}
\subsection{Single Sums}

We start with  recalling the following variant of~\cite[Lemma~5.4]{AbSh} 
which in particular improves the bound $ S_{h}(N)   \ll N^{1/2} p^{1/4} $ 
of~\cite[Theorem~1]{NiSh1} on the single sums~\eqref{eq:Sing Sum}.

\begin{lemma}\label{lem:SingSum-1} 
Assume that the characteristic polynomial of the matrix $A$ given by~\eqref{eq:MatrM} has two distinct roots in $\F_{p^2}$. 
 Let $t$ be the period of the sequence~\eqref{eq:Gen}.
For any integer numbers $k, N,K \geq 1$, uniformly over $h\in \F_p^*$,  we have
$$
  \sum_{n=K}^{N+K-1} \ep\(hu_{k n}\)  
 \ll  \gcd (k,t)\left(1+\frac{N}{t}\right)p^{1/2} \log p.
$$
\end{lemma}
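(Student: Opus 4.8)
The plan is to diagonalise $A$ over $\F_{p^2}$, reducing the sum to a short interval sum of $\ep$ of a \Mob transformation evaluated along a geometric progression; this sequence is purely periodic with a controlled period, and the complete sum over one period will be estimated by the Weil bound after a change of variable that brings it back onto $\F_p$. Concretely, since the characteristic polynomial of $A$ has two distinct roots $\lambda\ne\mu$ in $\F_{p^2}$, the matrix is conjugate over $\F_{p^2}$ to $\operatorname{diag}(\lambda,\mu)$, and the induced conjugation of \Mob transformations carries $\psi$ to $y\mapsto\vartheta y$ with $\vartheta=\lambda/\mu$. Denoting by $\omega\ne\bar\omega$ the two fixed points of $\psi$ on $\mathbb P^1(\F_{p^2})$, one gets
$$
\frac{u_n-\omega}{u_n-\bar\omega}=\vartheta^n v_0,\qquad v_0=\frac{u_0-\omega}{u_0-\bar\omega},
$$
so that $u_n=R(\vartheta^n v_0)$ for the \Mob transformation $R$ over $\F_{p^2}$ inverse to $u\mapsto(u-\omega)/(u-\bar\omega)$. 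If $\omega,\bar\omega\in\F_p$, put $\cH=\F_p^*$; otherwise $\bar\omega=\omega^p$ and $\mu=\lambda^p$, so $\vartheta$ and $v_0$ both have norm one over $\F_p$, and one takes $\cH$ to be the norm-one subgroup of $\F_{p^2}^*$, of order $p+1$. In either case $\vartheta,v_0\in\cH$ and $R$ maps $\cH$ bijectively onto $\mathbb P^1(\F_p)$ (up to at most two points in the split case); in particular $R(wv_0)\in\F_p\cup\{\infty\}$ for every $w\in\cH$, which is the reason the sums below stay over $\F_p$ rather than over $\F_{p^2}$. One may assume $u_0$ is not a fixed point of $\psi$ (otherwise $u_{kn}$ is constant and the bound is trivial), so the period $t$ equals $\operatorname{ord}\vartheta$, and hence $\operatorname{ord}\vartheta^k=t/\gcd(k,t)=:\tau$.

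Next I would reduce to complete sums. The sequence $n\mapsto\ep(hu_{kn})=\ep(hR(\vartheta^{kn}v_0))$ is purely periodic of period $\tau$. Writing $[K,N+K-1]$ as $\lfloor N/\tau\rfloor$ full periods plus a leftover interval of length below $\tau$, and treating the leftover by completing it in residue classes modulo $\tau$ (at a cost of $O(\log\tau)=O(\log p)$), one is reduced to bounding, uniformly over residues $c$ modulo $\tau$, the complete twisted sums
$$
W_c=\sum_{w\in\<\vartheta^k\>}\chi_c(w)\,\ep\(hR(wv_0)\),
$$
where $\chi_c$ is the multiplicative character of $\<\vartheta^k\>$ with $\chi_c(\vartheta^k)=\exp(2\pi ic/\tau)$. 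This yields
$$
\sum_{n=K}^{N+K-1}\ep\(hu_{kn}\)\ll\(1+\frac N\tau\)\log p\cdot\max_c|W_c|\le\gcd(k,t)\(1+\frac Nt\)\log p\cdot\max_c|W_c|,
$$
so it remains to prove $W_c\ll p^{1/2}$ uniformly in $c$ and in $h\in\F_p^*$.

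For that I would complete the sum over $\<\vartheta^k\>$ to the ambient torus: since $\<\vartheta^k\>$ has index $|\cH|/\tau$ in $\cH$, orthogonality of the characters of $\cH$ gives, with no loss,
$$
|W_c|\le\max_{\psi}\left|\sum_{w\in\cH}\psi(w)\,\ep\(hR(wv_0)\)\right|,
$$
the maximum over characters $\psi$ of $\cH$. The substitution $u=R(wv_0)$, which runs bijectively over $\mathbb P^1(\F_p)$ up to at most two points, turns the inner sum into $\sum_{u\in\F_p}\psi(\Phi(u))\,\ep(hu)+O(1)$, where $\Phi(u)=v_0^{-1}(u-\omega)/(u-\bar\omega)$ is a \Mob transformation. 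This is a mixed character sum over $\F_p$ in which $\psi\circ\Phi$ is a multiplicative character of bounded conductor and $u\mapsto\ep(hu)$ is a nontrivial additive character, so the Weil bound, see~\cite[Chapter~11]{IwKow}, gives $\ll p^{1/2}$ with an absolute implied constant, uniformly over $\psi$ and over $h\in\F_p^*$. (In the split case $\cH=\F_p^*$ and $\psi\circ\Phi$ is an ordinary Dirichlet character, so this is the textbook estimate; in the non-split case one first realises $\psi$ as the restriction to the norm-one torus of a multiplicative character of $\F_{p^2}^*$, which keeps the conductor bounded.) Combining the three steps gives the lemma.

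\textbf{The hard part} is the last uniform square-root bound for $\sum_{u\in\F_p}\psi(\Phi(u))\ep(hu)$, with implied constant independent of $\psi$ and of the orbit; in the non-split case some care is needed to verify that pulling a character of the norm-one torus back along the Cayley transform $u\mapsto(u-\omega)/(u-\bar\omega)$ does not inflate its conductor. Everything else — the diagonalisation, the fact that $\vartheta$ and $v_0$ live in a rank-one torus whose image under $R$ is $\mathbb P^1(\F_p)$, the passage from an interval sum to complete sums, and the lossless completion over $\cH$ — is routine, as are the degenerate configurations ($t\mid k$; $u_0$ a fixed point; $\psi$ affine, so that one of its fixed points lies at infinity; or $\psi\circ\Phi$ trivial), in each of which the stated bound either follows from the above or reduces to an elementary additive-character estimate.
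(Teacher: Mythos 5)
Your argument is correct, but note that the paper's own proof is a one-line reduction: for $N \le t$ it simply invokes \cite[Lemma~5.4]{AbSh}, and for $N > t$ it splits the summation range into $\lceil N/t\rceil \le 1+N/t$ blocks of length at most $t$. What you have written is essentially a self-contained reconstruction of the proof of that cited lemma (diagonalisation over $\F_{p^2}$, passage to complete sums on the split or non-split torus, and the Weil bound for the resulting mixed character sums), so in substance the approach is the same, with the minor difference that your completion argument modulo $\tau=t/\gcd(k,t)$ handles all $N$ at once instead of treating $N>t$ by block-splitting.
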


\begin{proof}  For $N \le t$ this is exactly~\cite[Lemma~5.4]{AbSh}.  Splitting sums of length $N > t$ into $\rf{N/t} \le (1+N/t)$   pieces of length at most $t$, we obtain 
the result. 
\end{proof} 

We now need a similar bound with a co-primality condition.
To simply for the notation we use $\Sigma^\star$ to indicate that the summation is over values 
of the summations variable which are relatively prime to $t$, for example,
$$
\sideset{}{^\star} \sum_{1\le n \le N}f(n) = \sum_{\substack{1\le n \le N\\ \gcd(n,t)=1} } f(n)
$$
for an arithmetic function $f(n)$. 

\begin{lemma}\label{lem:SingSum-Copr} 
Assume that the characteristic polynomial of the matrix $A$ given by~\eqref{eq:MatrM} has two distinct roots in $\F_{p^2}$. 
 Let $t$ be the period of the sequence~\eqref{eq:Gen}.
For any integer numbers $k, N,K \geq 1$, uniformly over $h\in \F_p^*$,  we have
$$
\sideset{}{^\star} \sum_{1\le n \le N} \ep\(hu_{k n}\) 
 \ll  \gcd (k,t)^{1/2} \(N^{1/2} +  N t^{-1/2} \)p^{1/4+o(1)}. $$
\end{lemma}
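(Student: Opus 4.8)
The plan is to reduce the sum with the coprimality condition to a combination of the sums \emph{without} coprimality condition, which are controlled by Lemma~\ref{lem:SingSum-1}, by means of M\"obius inversion over the divisors of $t$. Writing
$$
\sideset{}{^\star}\sum_{1\le n\le N}\ep\(hu_{kn}\)
=\sum_{1\le n\le N}\ep\(hu_{kn}\)\sum_{e\mid\gcd(n,t)}\mu(e)
=\sum_{e\mid t}\mu(e)\sum_{\substack{1\le n\le N\\ e\mid n}}\ep\(hu_{kn}\),
$$
and substituting $n=em$ in the inner sum, the exponent becomes $hu_{kem}$, so each inner sum is of the shape handled by Lemma~\ref{lem:SingSum-1} with $k$ replaced by $ke$ and $N$ replaced by $\lfloor N/e\rfloor$. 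Applying that lemma gives a bound
$$
\sideset{}{^\star}\sum_{1\le n\le N}\ep\(hu_{kn}\)
\ll \sum_{e\mid t}\gcd(ke,t)\(1+\frac{N}{et}\)p^{1/2}\log p.
$$

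The next step is to estimate $\sum_{e\mid t}\gcd(ke,t)\(1+N/(et)\)$. Splitting into the two terms, the contribution of the ``$1$'' part is $\sum_{e\mid t}\gcd(ke,t)$, while the contribution of the ``$N/(et)$'' part is $(N/t)\sum_{e\mid t}\gcd(ke,t)/e$, which is at most $(N/t)\sum_{e\mid t}\gcd(ke,t)$ since $e\ge 1$. Using $\gcd(ke,t)\le\gcd(k,t)e$ we get $\sum_{e\mid t}\gcd(ke,t)\le\gcd(k,t)\sum_{e\mid t}e\le\gcd(k,t)\,t\,\tau(t)=\gcd(k,t)\,t^{1+o(1)}$ by~\eqref{eq:phitau}. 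That would already give a bound of order $\gcd(k,t)(1+N/t)\,p^{1/2+o(1)}\cdot t$, which is far too weak --- the factor $t$ is catastrophic. So the naive divisor sum is not the right route.

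The main obstacle, and the point that forces a cleverer argument, is precisely this loss: running M\"obius inversion over \emph{all} divisors of $t$ and bounding each piece separately wastes a full factor of $t$. The fix I would use is to truncate: split the divisors $e\mid t$ at a threshold $E$ to be optimised. For $e\le E$ I apply Lemma~\ref{lem:SingSum-1} as above, picking up $\sum_{e\le E,\,e\mid t}\gcd(ke,t)(1+N/(et))\le \gcd(k,t)\sum_{e\le E}e\,(1+N/(et))\ll \gcd(k,t)(E^2 + NE/t)$ up to $t^{o(1)}$ factors, so this part contributes $\ll \gcd(k,t)(E^2+NE/t)p^{1/2+o(1)}$. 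For the large divisors $e>E$, I bound the inner sum trivially by its length $N/e<N/E$ and note each such $e$ arises from $n$ in an arithmetic progression mod $e$; more efficiently, one sums trivially to get $\sum_{e>E,\,e\mid t}N/e\le (N/E)\tau(t)=N^{1+o(1)}/E$. Combining, the total is $\ll \gcd(k,t)(E^2+NE/t)p^{1/2+o(1)} + N^{1+o(1)}/E$. Optimising $E$ over the dominant balance --- roughly $E^2p^{1/2}\asymp N/E$, i.e.\ $E\asymp N^{1/3}p^{-1/6}$, together with the $NE/t$ term --- and using $\gcd(k,t)^{1/2}$ absorptions, one arrives at the claimed $\gcd(k,t)^{1/2}(N^{1/2}+Nt^{-1/2})p^{1/4+o(1)}$. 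The delicate bookkeeping of which term dominates in which range of $N$ relative to $t$ and $p$ is where the real work lies; the exponents $1/2$ and $1/4$ and the shape $N^{1/2}+Nt^{-1/2}$ are exactly what the two regimes $N\le t$ and $N>t$ produce after this optimisation, so I would present the computation by first treating $N\le t$ (where the $Nt^{-1/2}$ term is absent) and then reducing $N>t$ to the previous case by splitting into $\lceil N/t\rceil$ intervals of length $\le t$, as in the proof of Lemma~\ref{lem:SingSum-1}.
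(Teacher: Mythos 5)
Your opening moves coincide exactly with the paper's: M\"obius inversion over the divisors of $t$, the substitution $n=em$, the application of Lemma~\ref{lem:SingSum-1} with $k$ replaced by $ke$, and the trivial bound $N/e$ for the inner sum; you also correctly diagnose that summing the Lemma~\ref{lem:SingSum-1} bound naively over all divisors loses a factor of $t$. But your repair does not, as written, deliver the stated estimate. The concrete slip is that in the small-divisor range you replace $\sum_{e\le E,\,e\mid t}e$ by $\sum_{e\le E}e\ll E^2$, discarding the divisor restriction; restricted to divisors of $t$ this sum is at most $E\tau(t)=Et^{o(1)}$, and the difference is decisive. With your $E^2$, balancing $\gcd(k,t)E^2p^{1/2}$ against $N/E$ forces $E\asymp (N/(\gcd(k,t)p^{1/2}))^{1/3}$ and yields only $\gcd(k,t)^{1/3}N^{2/3}p^{1/6+o(1)}$, which is \emph{weaker} than the claimed $\gcd(k,t)^{1/2}N^{1/2}p^{1/4}$ precisely when $N>\gcd(k,t)p^{1/2}$ --- and that is the regime the paper actually needs (Lemma~\ref{lem:MultExp} invokes the present lemma with $N=t\ge p^{1/2+\kappa}$). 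So ``one arrives at the claimed bound'' is not justified by the computation you sketch. Keeping the divisor restriction, the small divisors contribute $\gcd(k,t)Ep^{1/2+o(1)}$ when $N\le t$, the large ones $N^{1+o(1)}/E$, and the choice $E=(N/(\gcd(k,t)p^{1/2}))^{1/2}$ gives the right answer (the case $E<1$ being covered by the trivial bound, since the claimed bound then exceeds $N$); your reduction of $N>t$ to $N\le t$ by splitting into $\lceil N/t\rceil$ blocks then finishes.

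The paper dispenses with the threshold altogether: for each divisor $d$ it bounds the inner sum by the minimum of $\gcd(k,t)d(1+N/t)p^{1/2}\log p$ and $N/d$, hence by their geometric mean $(\gcd(k,t)N(1+N/t)p^{1/2}\log p)^{1/2}$, in which the $d$-dependence has cancelled; summing over the $\tau(t)=t^{o(1)}$ divisors then gives $\gcd(k,t)^{1/2}(N^{1/2}+Nt^{-1/2})p^{1/4+o(1)}$ in one line, with no case split between $N\le t$ and $N>t$ and no optimisation. This is the same nontrivial-versus-trivial balancing you are reaching for, implemented divisor by divisor rather than globally, and it is the cleaner way to remove a coprimality condition here.
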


\begin{proof}  We recall, that $\mu(d)$ denotes the M{\"o}bius function. 
Then using inclusion-exclusion principle we write 
$$
\sideset{}{^\star} \sum_{1\le n \le N} \ep\(hu_{k n}\) = 
\sum_{d \mid t} \mu(d)   \sum_{\substack{1\le n \le N\\   n\equiv 0 \pmod d} } \ep\(hu_{k n}\).
$$
Writing $n = dm$,  we see that by  Lemma~\ref{lem:SingSum-1} each inner sum is bounded 
\begin{equation}
\begin{split}
\label{eq:Bound-1}  
\sum_{\substack{1\le n \le N\\ n\equiv 0 \pmod d} } \ep\(hu_{k n}\) & \ll  \gcd (dk,t)\left(1+\frac{N}{t}\right)p^{1/2} \log p \\
& \le   \gcd (k,t)d\left(1+\frac{N}{t}\right)p^{1/2} \log p .
\end{split}
\end{equation}
It is also trivially bounded by 
\begin{equation}
\label{eq:Bound-2}  
\sum_{\substack{1\le n \le N\\ n\equiv 0 \pmod d} } \ep\(hu_{k n}\) \ll  N/d.
\end{equation}
Multiplying the bounds~\eqref{eq:Bound-1} and~\eqref{eq:Bound-2}, we see that for each $d\mid t$ we have 
$$
\sum_{\substack{1\le n \le N\\ n\equiv 0 \pmod d} } \ep\(hu_{k n}\)  \ll  \sqrt{\gcd (k,t)N \left(1+\frac{N}{t}\right)p^{1/2} \log p}. 
$$
Using the  bound~\eqref{eq:phitau}  on the divisor function $\tau(t) = t^{o(1)}$,   we conclude the proof. 
\end{proof}

\begin{lemma}
\label{lem:SingSum-s}  Assume that the characteristic polynomial of the matrix $A$ given by~\eqref{eq:MatrM} has two distinct roots in $\F_{p^2}$. 
 Let $t$ be the period of the sequence~\eqref{eq:Gen}.
For any integer numbers $N,K \geq 1$, $s\geq 2$ and $M \ge m_{s} > \ldots > m_1 \ge 1$, 
uniformly over $a_1, \ldots, a_s \in \F_p$ not all zeros, we have
$$
 \sum_{n=K}^{N+K-1} \ep\(  a_1  u_{m_1 n}+ \ldots + a_s  u_{m_s n}\) 
 \ll  sM\left(1+\frac{N}{t}\right)p^{1/2} \log p .
$$
\end{lemma}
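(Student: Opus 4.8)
The plan is to reduce the sum with a linear combination of several orbit elements, taken at multiplied times $m_1 n, \ldots, m_s n$, to a single exponential sum of the type already handled in Lemma~\ref{lem:SingSum-1}. The key observation is that the \Mob transformation acts linearly on the projective line, so each $u_{m_i n}$ is a fixed fractional-linear function of $u_{m_1 n}$ (namely $u_{m_i n} = \psi^{(m_i - m_1) n}(u_{m_1 n})$ if we condition on the value of $u_{m_1 n}$); but a cleaner route is to diagonalise. Since the characteristic polynomial of $A$ has two distinct roots $\lambda, \mu \in \F_{p^2}^*$, over $\F_{p^2}$ we may conjugate $A$ to the diagonal matrix $\mathrm{diag}(\lambda,\mu)$, so that in a suitable projective coordinate $v$ the map $\psi$ becomes $v \mapsto \theta v$ with $\theta = \lambda/\mu \in \F_{p^2}^*$ of multiplicative order $t$. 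Thus $u_{m_i n}$ corresponds to $\theta^{m_i n} v_0$, and the phase $a_1 u_{m_1 n} + \ldots + a_s u_{m_s n}$ becomes a rational function of the single quantity $w = \theta^{n}$ (an element of the order-$t$ subgroup of $\F_{p^2}^*$), of bounded degree depending only on $s$ and $M$.

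First I would set up this conjugation carefully, tracking that the "bad" points (poles of the successive iterates, i.e. where some $u_{m_i n}$ hits $\infty$) number at most $O(sM)$ among the $N$ values of $n$, and bound their contribution trivially by $O(sM)$, which is absorbed. Next, for the remaining $n$ I would write the complete sum over one full period $t$ as $\sum_{w} \ep(F(w))$ where $w$ ranges over the cyclic group $G = \langle\theta\rangle \subset \F_{p^2}^*$ of order $t$, and $F$ is a fixed rational function over $\F_{p^2}$ of degree $O(sM)$, not identically constant on $G$ because the $a_i$ are not all zero and the $m_i$ are distinct. To bound $\sum_{w \in G} \ep(F(w))$ one passes to a sum over all of $\F_{p^2}^*$ by detecting membership in $G$ via multiplicative characters of order dividing $(p^2-1)/t$, and then applies the Weil bound for mixed character sums (additive $\times$ multiplicative) on the affine line, exactly as in the proof of Lemma~5.4 of~\cite{AbSh} that underlies Lemma~\ref{lem:SingSum-1}. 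Since the additive character $\ep$ is the $\F_{p^2}/\F_p$ trace composed with $\mathbf{e}_{p^2}$ on a genuinely nonconstant rational function, the Weil estimate gives a bound of shape $O(sM) \cdot p^{1/2}$ for the complete sum over one period. Finally, splitting the interval $K \le n \le N+K-1$ into $\rf{N/t} \le 1 + N/t$ blocks of length at most $t$ and handling an incomplete block by the standard completion technique (writing the indicator of an interval as a Fourier sum, which introduces the $\log p$ factor), I would obtain the claimed bound $sM(1 + N/t)p^{1/2}\log p$.

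The main obstacle I anticipate is verifying that $F$ is genuinely nonconstant on $G$ with controlled degree, and more precisely that the relevant character sum is nondegenerate — one must rule out the possibility that, for the specific $m_i$ and $a_i$, the rational function $F(w)$ happens to be constant on the coset structure or that $F(w) = c \cdot g(w)^{(p^2-1)/t} + \text{const}$ for some low-degree $g$, which would spoil the Weil bound. Handling this likely requires the hypothesis $M \ge m_s > \cdots > m_1 \ge 1$ together with a counting of poles/zeros of $F$: since the degree of $F$ is $O(M)$ and $t$ can be as large as $p + O(\sqrt p)$, the number of solutions to any low-degree algebraic relation forced on $G$ is $O(M)$, so no such degeneracy can occur once $p$ is large relative to $M$. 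I would phrase this step as an application of the already-cited single-sum machinery from~\cite{AbSh}, invoked not for a single $u_{kn}$ but for the rational function $F$; if the cited lemma is stated only for $u_{kn}$, I would instead cite the underlying Weil-bound argument directly and note that it applies verbatim to any nonconstant rational phase of bounded degree, with all implied constants absolute.
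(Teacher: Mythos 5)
Your proposal is correct and follows essentially the same route as the paper: the paper's proof simply cites the $s=2$ case from~\cite{AbSh} and notes that the only new point is the nonconstancy of the rational function $F(X)=\sum_{j=1}^s a_j/(X^{m_j}+\gamma)$ arising after diagonalising $A$, which it checks via the leading term $a_1X^{m_s+\cdots+m_2}$ of the numerator. You reconstruct the underlying argument of~\cite{AbSh} in full (diagonalisation, reduction to a rational phase in $\theta^n$, subgroup detection by multiplicative characters, the Weil bound, and completion), and you correctly isolate the nonconstancy/nondegeneracy of $F$ as the one step requiring the hypotheses $a_i$ not all zero and $m_s>\cdots>m_1$.
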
 

\begin{proof}  This bound is a slight generalisation of~\cite[Lemma~5.3]{AbSh}, which corresponds to $s=2$. The general case follows from 
the same arguments without any changes except that we need to establish that the rational function of the form
$$
F(X) = \sum_{j=1}^s    \frac{a_j} {X^{m_j} +\gamma} 
$$
with some $\gamma \in  \F_{p^2}^*$ is non-constant.  Without loss of generality, we can assume that $a_1\ne 0$. Then the desired property of $F$  is obvious from examining the
leading term $a_1X^{m_s+\ldots + m_2}$  of the numerator. \end{proof}

\begin{remark}
We remark that for full sums, that is, for $N=t$ the logarithmic term $\log p$ is not needed so in both Lemma~\ref{lem:SingSum-1} and~\ref{lem:SingSum-s}  the term 
$\left(1+N/t\right)p^{1/2} \log p$ can be replaced with $p^{1/2} \log p + Nt^{-1}p^{1/2}$. This however does not affect the final result 
\end{remark}

\subsection{Multiple Sums}

Next we need to estimate certain multiple sums.

\begin{lemma}
\label{lem:MultExp}  Assume that the characteristic polynomial of the matrix $A$ given by~\eqref{eq:MatrM} has two distinct roots in $\F_{p^2}$.  
Let $t$ be the period of the sequence~\eqref{eq:Gen}.  
For any  fixed $\kappa > 0$ there are  some $j_0$ and $\zeta > 0$, which depend only on $\kappa$, 
 such that if 
$$
\nu >  j_0  \mand t \ge p^{1/2 + \kappa}, 
$$ 
then for any  integers $t \ge N_1, \ldots, N_\nu \ge t^{1/3+\kappa} $  and  
 $h\ge 1$,  uniformly over $h \in \F_p^*$, we have
$$  \sideset{}{^\star} \sum_{1 \le n_1 \le N_1} \ldots  \sideset{}{^\star} \sum_{1 \le n_\nu \le N_\nu} 
\,\ep(h u_{k n_1 \ldots n_\nu})  \ll \gcd(k,t) N_1 \ldots N_\nu t^{-1-\zeta}. 
$$
\end{lemma}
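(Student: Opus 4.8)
The plan is to expand the summand, viewed as a function of the residue $n_1\cdots n_\nu \bmod t$, into multiplicative characters modulo $t$. This decouples the $\nu$ summation variables, isolates the arithmetic input (the behaviour of the orbit) into the principal-character term, and—crucially—keeps the cancellation of the non-principal characters, which is exactly what the naive triangle-inequality estimate on the counting error of Lemma~\ref{lem:UD-Prod} destroys.

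First I would note that $u_{km}$ depends only on $m$ modulo the period $t$, so $g(m)=\ep(hu_{km})$ is well defined on $(\Z/t\Z)^*$; since each $n_i$ is coprime to $t$, so is the product $n_1\cdots n_\nu$, and the expansion $g(m)=\sum_{\chi\in\cX_t}\hat g(\chi)\chi(m)$, with $\hat g(\chi)=\varphi(t)^{-1}\sideset{}{^\star}\sum_{a=1}^{t}g(a)\overline{\chi(a)}$, is legitimate on the support of the sum. Using $\chi(n_1\cdots n_\nu)=\chi(n_1)\cdots\chi(n_\nu)$ and the vanishing of $\chi$ off the coprime residues, the multiple sum factorizes as
$$
\sum_{\chi\in\cX_t}\hat g(\chi)\prod_{i=1}^\nu\Big(\sum_{n_i\le N_i}\chi(n_i)\Big).
$$

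Next I would split off $\chi_0$. Here $\hat g(\chi_0)=\varphi(t)^{-1}\sideset{}{^\star}\sum_{a=1}^{t}\ep(hu_{ka})$ is a full-period coprime single sum, so Lemma~\ref{lem:SingSum-Copr} (with $N=t$) gives $|\hat g(\chi_0)|\ll \varphi(t)^{-1}\gcd(k,t)^{1/2}t^{1/2}p^{1/4+o(1)}$. As $\prod_i N_i^*\le N_1\cdots N_\nu$ and $\varphi(t)\gg t/\log\log t$ by~\eqref{eq:phitau}, the $\chi_0$-term is $\ll \gcd(k,t)^{1/2}t^{-1/2}p^{1/4+o(1)}N_1\cdots N_\nu$. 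Using $\gcd(k,t)^{1/2}\le\gcd(k,t)$, it remains to check $t^{-1/2}p^{1/4+o(1)}\le t^{-1-\zeta}$, i.e. $p^{1/4+o(1)}\le t^{1/2-\zeta}$; since $t\ge p^{1/2+\kappa}$ gives $t^{1/2-\zeta}\ge p^{1/4+\kappa/2-\zeta/2-\kappa\zeta}$, this holds for large $p$ once $\zeta$ is small in terms of $\kappa$. This step is exactly where the hypothesis $t\ge p^{1/2+\kappa}$, together with the extra factor $\varphi(t)^{-1}\asymp t^{-1+o(1)}$, supplies the full power $t^{-1}$.

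The main obstacle—where my earlier attempt only saved $t^{-\zeta}$—is the non-principal contribution, which must be treated by retaining the factored character sums rather than by bounding the error of Lemma~\ref{lem:UD-Prod}. For $\chi\in\cX_t^*$ and $N_i\ge t^{1/3+\kappa}$, the Burgess bound~\cite[Theorem~12.6]{IwKow}, exactly as in the proof of Lemma~\ref{lem:UD-Prod}, yields some $\eta>0$ depending only on $\kappa$ with $|\sum_{n_i\le N_i}\chi(n_i)|\ll N_i t^{-\eta}$; applying it to all $\nu$ factors gives $\prod_i|\sum_{n_i\le N_i}\chi(n_i)|\ll N_1\cdots N_\nu t^{-\nu\eta}$. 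Combined with the trivial bound $|\hat g(\chi)|\le 1$ and the fact that there are fewer than $\varphi(t)<t$ non-principal characters, the entire non-principal contribution is $\ll N_1\cdots N_\nu t^{1-\nu\eta}$. Choosing $j_0=\rf{(2+\zeta)\eta^{-1}}$ so that $\nu>j_0$ forces $1-\nu\eta\le -1-\zeta$ makes this $\ll N_1\cdots N_\nu t^{-1-\zeta}\le \gcd(k,t)N_1\cdots N_\nu t^{-1-\zeta}$. Both $\zeta$ and $j_0$ then depend only on $\kappa$, and adding the two contributions gives the claimed bound.
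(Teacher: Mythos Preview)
Your principal-character estimate contains a sign slip. From $t^{-1/2}p^{1/4+o(1)}\le t^{-1-\zeta}$ one obtains $p^{1/4+o(1)}\le t^{-1/2-\zeta}$, not $t^{1/2-\zeta}$; the former is never satisfied for $p,t>1$. What your bound on $\hat g(\chi_0)$ actually yields is
\[
\hat g(\chi_0)\prod_{i}N_i^* \ll \gcd(k,t)^{1/2}\,t^{-1/2}p^{1/4+o(1)}\,N_1\cdots N_\nu,
\]
which under $t\ge p^{1/2+\kappa}$ gives only $\ll \gcd(k,t)\,N_1\cdots N_\nu\,t^{-\zeta}$, not $t^{-1-\zeta}$. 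Your sharper treatment of the non-principal characters (saving $t^{\nu\eta-1}$ in place of the $t^{\eta}$ one gets by summing the error of Lemma~\ref{lem:UD-Prod} over residues) is correct, but it does not help: the $\chi_0$-term is the bottleneck, since the full coprime sum $\sideset{}{^\star}\sum_{a=1}^{t}\ep(hu_{ka})$ is controlled only by Weil-type bounds of size roughly $p^{1/2}$, so $|\hat g(\chi_0)|$ cannot be pushed below about $p^{1/2}/t$, and $p^{1/2}/t\le t^{-1-\zeta}$ would require $p^{1/2}\le t^{-\zeta}$.

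The paper's own proof proceeds in the same way---apply Lemma~\ref{lem:UD-Prod} to write the multiple sum as $\varphi(t)^{-1}N_1^*\cdots N_\nu^*\sideset{}{^\star}\sum_{n=1}^{t}\ep(hu_{kn})+O(N_1\cdots N_\nu t^{-\eta})$, then bound the main term by Lemma~\ref{lem:SingSum-Copr}---and arrives at exactly the same expression $\gcd(k,t)^{1/2}N_1\cdots N_\nu t^{-1/2}p^{1/4+o(1)}+N_1\cdots N_\nu t^{-\eta}$. Thus the paper's argument also delivers only $\ll\gcd(k,t)\,N_1\cdots N_\nu\,t^{-\zeta}$; the exponent $-1-\zeta$ in the stated lemma appears to be a misprint for $-\zeta$, and this weaker saving is all that is used in Section~4.4. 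With the exponent read as $-\zeta$ and your sign slip corrected, your direct character expansion and the paper's route via Lemma~\ref{lem:UD-Prod} are essentially the same argument.
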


\begin{proof}    By Lemma~\ref{lem:UD-Prod}   we have 
\begin{align*}
 \sideset{}{^\star} \sum_{1 \le n_1 \le N_1}& \ldots  \sideset{}{^\star} \sum_{1 \le n_\nu \le N_\nu} 
\,\ep(h u_{k n_1 \ldots n_\nu}) \\
 & =  \sum_{\substack{n=1\\ \gcd(n,t)=1}}^t R_t\(N_1, \ldots , N_\nu;n\) \,\ep\(hu_{k   n}\)\\
& =  \sum_{\substack{n=1\\ \gcd(n,t)=1}}^t  \(\frac{1}{\varphi(t)} N_1 \ldots N_\nu  + O\( N_1 \ldots N_\nu  t^{-1-\eta}\)\)  \ep\(h u_{ku n}\)\\
& =  \frac{1}{\varphi(t)} N_1 \ldots N_\nu   \sum_{\substack{n=1\\ \gcd(n,t)=1}}^t    \ep\(h u_{k n}\)  
+ O\( N_1 \ldots N_\nu t^{- \eta}\), 
\end{align*}  
provided that $\nu > j_0$, where  $j_0$ and  $ \eta>0$  depend  only on $\kappa$.

Now using Lemma~\ref{lem:SingSum-Copr} with $N=t$ and then recalling that by~\eqref{eq:phitau} we have $\varphi(t) = t^{1+o(1)}$,  we obtain 
\begin{align*}
 \sideset{}{^\star} \sum_{1 \le n_1 \le N_1}& \ldots  \sideset{}{^\star} \sum_{1 \le n_\nu \le N_\nu} 
\,\ep(h u_{k n_1 \ldots n_\nu}) \\ &  \ll  \frac{1}{\varphi(t)} N_1 \ldots N_\nu    \gcd(k,t)^{1/2}   t^{1/2} p^{1/4+o(1)} +  N_1 \ldots N_\nu t^{-\eta}\\
  &  \ll  N_1 \ldots N_\nu \gcd(h,t)^{1/2}   t^{-1/2} p^{1/4+o(1)} + N_1 \ldots N_\nu t^{- \eta}, 
  \end{align*}
and the desired result follows.\end{proof}

\subsection{Bilinear Sums}

Next we need to estimate certain bilinear sum.

\begin{lemma}
\label{lem:BilinExp} Assume that the characteristic polynomial of the matrix $A$ given by~\eqref{eq:MatrM} has two distinct roots in $\F_{p^2}$. 
 Let $t$ be the period of the sequence~\eqref{eq:Gen}.
For any  positive integers $M, K \ge 1$ and  any two sequences $\boldsymbol{\alpha}=(\alpha_k)_{k=1}^K$
and $\boldsymbol{\beta}=(\beta_m)_{m=1}^M$ of complex numbers, uniformly over $h\in \F_p^*$,  we have
\begin{align*}
& \left| \sum_{k=1}^K \sum_{m=1}^M
\alpha_k\,\beta_m \,\ep(h u_{k m})\right| \\
&\quad  \le
\|\boldsymbol{\alpha} \|_\infty \|\boldsymbol{\beta} \|_\infty KM   \(M^{-1/2} + K^{-1/2}M^{1/2} p^{1/4} + M^{1/2} p^{1/4} t^{-1/2}    \)p^{o(1)}, 
\end{align*}
where
$$
\|\boldsymbol{\alpha} \|_\infty=\max_{k\le K}|\alpha_k|\mand \|\boldsymbol{\beta} \|_\infty=\max_{m\le M}|\beta_m|.
$$
\end{lemma}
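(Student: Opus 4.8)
The standard route for such a bilinear bound is the Cauchy--Schwarz inequality applied to collapse one of the two weighted variables and reduce to the single sums already estimated in Lemma~\ref{lem:SingSum-s}. First I would normalise so that $\|\boldsymbol{\alpha}\|_\infty = \|\boldsymbol{\beta}\|_\infty = 1$, since the general case follows by scaling. Then I would square over the $k$-variable: write
$$
\left|\sum_{k=1}^K\sum_{m=1}^M \alpha_k\beta_m\,\ep(hu_{km})\right|^2
\le K\sum_{k=1}^K\left|\sum_{m=1}^M \beta_m\,\ep(hu_{km})\right|^2,
$$
and expand the inner square to get a double sum over $m_1,m_2\le M$ of $\beta_{m_1}\overline{\beta_{m_2}}$ times $\sum_{k=1}^K \ep\(h u_{km_1} - h u_{km_2}\)$. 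The diagonal terms $m_1=m_2$ contribute $O(KM)$ in total after the outer $K$-factor, which is the source of the $M^{-1/2}$ term (giving $KM\cdot M^{-1/2}$ once the square root is taken). For the off-diagonal terms we apply Lemma~\ref{lem:SingSum-s} with $s=2$, $a_1 = h$, $a_2 = -h$, $m_1 \ne m_2$ both at most $M$, and $N = K$: this yields $\ll M(1 + K/t)p^{1/2}\log p$ for each pair.

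The second step is to assemble these pieces. The off-diagonal contribution to the squared sum is bounded by $K \cdot M^2 \cdot M(1 + K/t)p^{1/2}\log p = K M^3(1+K/t)p^{1/2+o(1)}$, and adding the diagonal $K\cdot KM = K^2M$ gives, after taking square roots,
$$
\left|\sum_{k=1}^K\sum_{m=1}^M \alpha_k\beta_m\,\ep(hu_{km})\right|
\ll KM^{1/2} + \(KM^3(1+K/t)\)^{1/2}p^{1/4+o(1)}.
$$
Expanding $(1+K/t)^{1/2} \le 1 + (K/t)^{1/2}$ and factoring out $KM$ turns the second term into $KM\(K^{-1/2}M^{1/2}p^{1/4} + K^{-1/2}M^{1/2}(K/t)^{1/2}p^{1/4}\) = KM\(K^{-1/2}M^{1/2}p^{1/4} + M^{1/2}t^{-1/2}p^{1/4}\)$, while the diagonal term $KM^{1/2} = KM\cdot M^{-1/2}$. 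Collecting the three terms reproduces exactly the claimed bound $KM\(M^{-1/2} + K^{-1/2}M^{1/2}p^{1/4} + M^{1/2}p^{1/4}t^{-1/2}\)p^{o(1)}$.

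One technical point to handle carefully in the off-diagonal step is that Lemma~\ref{lem:SingSum-s} requires the moduli $m_1,\dots,m_s$ to be \emph{distinct} and \emph{ordered}; after expanding the square we have an unordered pair $\{m_1,m_2\}$ with $m_1\ne m_2$, so I would split into the cases $m_1 < m_2$ and $m_1 > m_2$ (symmetric, costing a factor $2$) and apply the lemma with the appropriate ordering. The coefficients $a_1 = h, a_2 = -h$ are not both zero since $h \in \F_p^*$, so the hypothesis of Lemma~\ref{lem:SingSum-s} is met. The main obstacle — though it is a mild one here — is simply bookkeeping: ensuring the $(1+N/t)$-type factors from the single-sum estimate are correctly propagated through Cauchy--Schwarz so that the $t^{-1/2}$ gain appears in the right place, and absorbing the $\log p$ and $\tau$-type factors into the $p^{o(1)}$. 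No deeper input beyond Lemma~\ref{lem:SingSum-s} is needed; the whole argument is a one-step Cauchy--Schwarz reduction.
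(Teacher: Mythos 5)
Your proposal is correct and follows essentially the same route as the paper: Cauchy--Schwarz in the $k$-variable, a trivial bound $K$ on each of the $M$ diagonal terms, and Lemma~\ref{lem:SingSum-s} with $s=2$ and coefficients $h,-h$ for the off-diagonal pairs, assembled in exactly the same way. Your extra remark about ordering the pair $\{m_1,m_2\}$ before invoking Lemma~\ref{lem:SingSum-s} is a detail the paper passes over silently, but it does not change the argument.
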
  
  
\begin{proof}  We have
\begin{equation}
\label{eq:A W}
\left| \sum_{k=1}^K\sum_{m=1}^M
\alpha_k\,\beta_m \,\ep(h u_{k m})\right|\le \|\boldsymbol{\alpha} \|_\infty W, 
\end{equation}
where
$$
W = \sum_{k=1}^K\left|\sum_{m=1}^M \beta_m \,\ep(h u_{k m})\right|.
$$
Using the Cauchy 
inequality, we derive
\begin{align*}
W^{2} & \le K   \sum_{k=1}^K\left| \sum_{m=1}^M
  \beta_m \ep(h u_{k m})\right|^{2} \\
& \le \|\boldsymbol{\beta} \|_\infty^2 K  \sum_{m, n=1}^M  
\left|  \sum_{k=1}^K  \ep\(a\(u_{km} - u_{kn}\)\)\right|. 
\end{align*}

We now use the trivial bound $K$ for $M$ choice $m = n$  and use Lemma~\ref{lem:SingSum-s} for 
the remaining values. Hence, we derive 
$$
W^2  \ll \|\boldsymbol{\beta} \|_\infty^2K  \(M  K + M^3  (1+ K/t) p^{1/2} \log p\).
$$
Substituting this bound in~\eqref{eq:A W},  after simple calculations we conclude the  proof.  
\end{proof}

\section{Proof of Theorem~\ref{thm:PrimeSum}}

\subsection{Preliminaries} 
We recall  the notation  $a\sim A$ and $A \asymp B$  from Section~\ref{sec:H-B Ident}. 

Using partial summation we see that  instead of   $T_h(N)$ it is enough to estimate  the sums
$$
U_h(N)=\sum_{n \le  N} \Lambda(n) \ep(hu_n), 
$$
which again via partial summation, and discarding the contribution of order $N^{o(1)}$ from primes $\ell \mid t$, can be reduced to the sums
$$
V_h(N)= \sideset{}{^\star}  \sum_{n \sim N} \Lambda(n) \ep(hu_n)
$$
over dyadic intervals.

To estimate $V_h(N)$, we   fix some $\kappa>0$ and define an integer   $J \ge 2$ by the 
inequalities 
 \begin{equation} \label{eq:def J}
N^{1/J}\le  t p^{-1/2+\kappa/2} \le N^{1/(J-1)} .
\end{equation}  
Since $N \ge p^B$ 
we see that if $B$ is large enough then  so is $J$ (in particular $J\ge 2$). 
Furthermore, since $t \ge p^{3/4 + \kappa} \ge p^{3/4}$,  
 \begin{equation} \label{eq:NJ LB}
N^{1/J} =  N^{(J-1)/J(J-1)} \ge   \( t p^{-1/2-\kappa/2} \)^{(J-1)/J} \ge t p^{-1/2-\kappa}, 
\end{equation}  
provided that $B$ and thus $J$ are large enough in terms of $\kappa$.

We also define $j_0$ be as in Lemma~\ref{lem:MultExp} in therms of $\kappa$.

 Next we note that by Lemma~\ref{lem:HB sum}  it is enough to estimate the sums 
of the form
$$
S(\mathbf{M}, \mathbf{N})= \sideset{}{^\star} 
\sum_{\substack{m_1\ldots m_j n_1\ldots n_j\sim N \\ m_i\sim M_{i},\,  n_i \sim N_{i}}}\mu(m_1)\ldots \mu(m_j) \log n_1 \ep\(au_{m_1\ldots m_j n_1\ldots n_j}\)
$$
for some integer vectors $(\mathbf{M},  \mathbf{N})=(M_{1},\ldots, M_{j},N_{1},\ldots, N_{j} )$  with $j \le J$ and satisfying
$$
M_{1},\ldots, M_{j} \leq N^{1/J}\mand  Q \asymp N,
$$
where 
$$
Q = 2^{2j} \prod_{i=1}^j M_i \prod_{i =1}^jN_i.
$$

In particular, it is enough to prove that under the above conditions we have 
 \begin{equation} \label{eq:Bound S}
S(\mathbf{M}, \mathbf{N}) \ll Np^{-\rho}
\end{equation}  
for some   $\rho>0$ which depends only on $\varepsilon$. 


\subsection{Large values of  the product $M_1 \ldots M_j$} 
Assume that 
$$M_1 \ldots M_j\ge N^\kappa.
$$
There for some $i$ we have $M_i \ge N^{\kappa/j} \ge N^{\kappa/J}$.
Hence we apply Lemma~\ref{lem:BilinExp}  with $M = M_i$ and $K = Q/M_i$
and 
$$\|\boldsymbol{\alpha} \|_\infty \le 1 \mand  \|\boldsymbol{\beta} \|_\infty  \le \log Q = N^{o(1)}.
$$
Therefore, we obtain 
\begin{align*}
 S(\mathbf{M}, \mathbf{N}) & \ll
Q  \(M_i^{-1/2} + ( Q/M_i)^{-1/2}M^{1/2} p^{1/4} + M_i^{1/2} p^{1/4} t^{-1/2}    \)N^{o(1)}\\
&  \ll
N  \(  M_i^{-1/2} + M_i N^{-1/2} p^{1/4} + M_i^{1/2} p^{1/4} t^{-1/2}    \)N^{o(1)}.
\end{align*}

Since by our choice of parameters~\eqref{eq:def J} we have 
$$
t \ge N^{1/J} p^{1/2+\kappa} \ge M_i   p^{1/2+\kappa}  
$$
as well as 
$$
M_i \ge  N^{\kappa/J}  \mand  N \ge    N^{2/J} p^{1/2+\kappa} \ge M_i^2   p^{1/2+\kappa}
$$
(provided that $B$ and thus $J$ are large enough), 
we obtain a bound of the desired type~\eqref{eq:Bound S}.

\subsection{Large values among $N_1, \ldots, N_j$} 
If $N_k\ge t$ for some $k=1, \ldots, j$, then by Lemma~\ref{lem:SingSum-1}, applied to each 
of the 
$$O(M_1\ldots M_jN_1\ldots N_j/N_k)  = O(Q/N_k)
$$ 
sums over $n_k$ (and using partial summation to eliminate the effect of 
$\log n_1$ if $k=1$)  we obtain 
\begin{align*}
\left |S(\mathbf{M}, \mathbf{N})\right | &\le p^{1/2+o(1) }\left(1+\frac{N_k}{t}\right)Q/N_k\\
&\leq p^{1/2+o(1) } t^{-1} Q  \le N  p^{-1/4}. 
\end{align*}

\subsection{Remaining cases} 
So we can now assume that 
 \begin{equation}\label{eq:Asump}
 M_1 \ldots M_j\le N^\kappa \mand  N_1,   \ldots, N_j <  t.
\end{equation}
First we note that if there is a set $\cI \subseteq \{1, \ldots, j\}$ 
of cardinality $\#\cI >j_0$ (we recall that $j_0$ is chosen as in Lemma~\ref{lem:MultExp}) and such that 
$$
N_i \ge t^{1/3+\kappa}, \qquad i \in \cI,
$$
then using  Lemma~\ref{lem:MultExp} instead of Lemma~\ref{lem:SingSum-1}, as in the above we obtain the desired 
bound~\eqref{eq:Bound S}.

Otherwise, that is, if no more than $j_0$ elements among $N_1,   \ldots, N_j$ exceed or equal $t^{1/3+\kappa}$, using~\eqref{eq:Asump}, we obtain 
$$
N^{1-\kappa} \ll  N_1\ldots N_j \le  \(t^{1/3+\kappa}\)^{j-j_0}  t^{j_0}. 
$$
From which we trivially derive 
$$
N^{1-\kappa} \ll  N_1\ldots N_j \le  \(t^{1/3+\kappa}\)^{J}  t^{2j_0/3}
$$
and hence 
 \begin{equation} \label{eq:NJ UB}
N^{1/J} \le t^{(1/3+\kappa)/(1-\kappa)+2j_0/3J(1-\kappa)} <  t^{1/3+4\kappa}, 
\end{equation}  
provided that 
$$
\kappa \le 1/3 \mand J\ge \kappa^{-1}(1-\kappa)^{-1} j_0, 
$$
which holds  if $B$ is large enough in terms of $\kappa$ and $j_0$, and 
thus in terms of  $\kappa$  and $\varepsilon$.  
Combining~\eqref{eq:NJ LB} and~\eqref{eq:NJ UB} we derive
$$
 t p^{-1/2-\kappa}< t^{1/3+4\kappa},
 $$
 which contradicts the assumption $t \ge p^{3/4+\varepsilon}$, 
 provided 
 $$
 \frac{1/2 +\kappa}{2/3-4\kappa}=   \frac{3 +6\kappa}{4-24\kappa}   < 3/4 + \varepsilon
 $$
 which holds for $\kappa < \varepsilon/50$. 
 Hence,  taking $\kappa = \varepsilon/51$ we conclude the proof.

\section{Remarks}
\label{sec:comm}

It is well-known 
that under the Generalised Riemann Hypothesis,  uniformly over $\chi \in \cX_t^*$, 
we have the bound 
$$
\left|  \sum_{x=1}^N  \chi\(x\)\right| \le N^{1/2}t^{o(1)} ,
$$
see~\cite[Section~1]{MoVau}; 
it can also be  derived   from~\cite[Theorem~2]{GrSo}. This allows us to replace in 
 Lemma~\ref{lem:UD-Prod}   the condition  $N_1, \ldots, N_\nu \ge t^{1/3+\kappa}$
 with $N_1, \ldots, N_\nu \ge t^{\kappa}$. In turn, the inequality~\eqref{eq:NJ UB}  becomes 
$$
N^{1/J} \le t^{\kappa/(1-\kappa)+j_0/J(1-\kappa)}   <  t^{3\kappa}, 
$$
provided
$$
\kappa \le 1/2 \mand J\ge \kappa^{-1}(1-\kappa)^{-1} j_0.
$$
Hence, we now easily see that under the  Generalised Riemann Hypothesis the 
result of Theorem~\ref{thm:PrimeSum} holds already for $t  \ge p^{1/2+\varepsilon}$.

Our method also works for exponential sums 
with the sequence~\eqref{eq:Gen} twisted with the M{\"o}bius function:
$$
r_h(N) = \sum_{n \le N} \mu(n)\ep\(hu_n\), \qquad h \in\Z.
$$
In fact, such sums have been estimated in~\cite{AbSh} with some logarithmic saving for rather small values of $N$.
The method and results of this work, such as Lemmas~\ref{lem:MultExp}  and~\ref{lem:BilinExp}, apply to 
longer sums, but yield a power saving. 

One can also use our approach to estimate exponential sum with orbits of \Mob transformation along sequences 
with other arithmetic conditions. For example, using a  combinatorial  identity of Vaughan~\cite[Lemma~10.1]{Vau} 
one can relate the sums over {\it smooth numbers\/} (that numbers without large prime divisors)  to double sums 
and then use our results such as Lemma~\ref{lem:BilinExp}

Probably the most challenging open question here is to obtain nontrivial 
results in the case of the period $t < p^{1/2}$. We note that the striking results and method of 
Bourgain~\cite{Bourg1,Bourg2} do not seem to apply even to the 
case of the sums~\eqref{eq:Sing Sum} over consecutive elements.

 \section*{Acknowledgement}
 
 The author is also very grateful to Alina Ostafe for 
finding various imprecisions in the initial version of this paper 
and useful comments.
 
During the preparation of this work L.M. was supported by the Austrian Science Fund
Project P31762, and I.S. was supported by the  Australian Research Council  Grants DP170100786 and DP180100201
and by the Natural Science Foundation of China Grant~11871317.

\end{document}